\documentclass[11pt]{article}

\usepackage{fullpage}
\usepackage{amsmath, amsthm, amsfonts, amssymb, amstext, mathrsfs, enumerate}
\usepackage{graphicx, ragged2e, lscape, framed, xcolor}
\usepackage{subfiles}

\theoremstyle{plain}
\newtheorem{theorem}{Theorem}[section]

\newtheorem{problem}[theorem]{Problem}
\newtheorem{claim}{Claim}[section]

\numberwithin{equation}{section}
\allowdisplaybreaks

\newcommand{\affl}[3]{\noindent #1, Email: {\tt #2}\\ \textsc{#3}\\[1.5pt]}

\usepackage[pagebackref]{hyperref}
\hypersetup{
	colorlinks=true,
    urlcolor=purple,
	linkcolor=purple,
    citecolor=purple,
}

\title{\textbf{A somewhat sure note on an un-Schur problem}}
\author{Swaroop Hegde \and Hitesh Kumar \and Pratibha}
\date{}

\newcommand{\Z}{\mathbb Z}

\begin{document}
\maketitle
\begin{abstract} 
Parczyk and Spiegel initiated the study of an anti-Ramsey multiplicity variant of Schur's theorem and proved that the maximum fraction of Schur triples that can be rainbow in a $3$-coloring of $\{ 1, \dots ,n \}$ is bounded asymptotically between $0.4$ and $0.66364$. Furthermore, they conjectured that their lower bound is optimal. We disprove this conjecture and prove new bounds. In particular, we show that the maximum fraction of rainbow Schur triples that can be rainbow in a $3$-coloring of $\{ 1, \dots ,n \}$ lies between $9/22$ and $8/15$ asymptotically. Moreover, we study the problem in the general $k$-color setting and establish new non-trivial bounds. 
\end{abstract}

\noindent
\textbf{Keywords:} rainbow Schur triples; rainbow triangles; probabilistic method.

\noindent
\textbf{MSC2020:} 05D10, 11B30, 05D40

\section{Introduction}

Schur's Theorem is one of the foundational results in Ramsey theory. It states that in any coloring of $[n]:=\{ 1, \ldots, n\}$ using $k$ colors, there exists a monochromatic solution to the equation $x+y=z$, if $n$ is sufficiently large as a function of $k$. We call a triple $(x,y,z)$ of positive integers a \emph{Schur triple} if it satisfies the Schur equation $x+y=z$. 

Frankl, Graham and R\"{o}dl \cite{FGR_1988} proved a supersaturation result showing that in any $k$-coloring of $[n]$, asymptotically a positive fraction of all Schur triples are monochromatic. The problem of determining the asymptotic minimum in the two-color case was posed by Graham, R\"{o}dl, and Ruci\'{n}ski \cite{Graham_Rodl_Rucinski_1996} and was subsequently popularized by Graham as a $\$100$ problem (cf. \cite{Robertson_Zeilberger_1998}). This problem is now solved, and it is known that the exact minimum fraction is $2/11$ asymptotically due to the works of Robertson--Zeilberger \cite{Robertson_Zeilberger_1998}, Schoen \cite{Schoen_1999}, and Datskovsky \cite{Datskovsky_2003}. Schoen \cite{Schoen_1999} further obtained a characterization of extremal colorings. Similar problems have been studied when $[n]$ is colored using $k\ge 3$ colors (see \cite{DeLoera_Ventura_Wang_Wesley_2025, Mao_Robertson_Wang_Yang_Yang_2026, Thanatipanonda_2009}), but determining the exact asymptotic fraction remains an open problem even in the case $k=3$.

In this paper, we are interested in the anti-Ramsey problem concerning rainbow solutions to the Schur equation. A Schur triple $(x,y,z)$ is said to be \emph{rainbow} if $x, y$ and $z$ have pairwise distinct colors. This problem is interesting only when we color $[n]$ with at least 3 colors. In the 3-color case, Alekseev and Savchev \cite{alekseev1987problem} proved that if each color class in a $3$-coloring of $[3n]$ has the same cardinality, then there must exist a rainbow Schur triple. Sch\"{o}nheim \cite{Schonheim_1990} improved this by showing that it is sufficient to require each color class to have cardinality greater than $n/4$, and proved that this is optimal. 

The problem of counting the number of rainbow Schur triples was only recently posed by Parczyk and Spiegel \cite{Parczyk_Spiegel_2026}. Given $n, k\in \mathbb{N}$ and a $k$-coloring $c$ of $[n]$, let $R_n(c)$ denote the set of ordered pairs $(x,y)$ such that $(x,y,x+y)$ is a rainbow Schur triple with respect to the coloring $c$. Precisely,
\begin{equation}\label{eq:Schur_pairs}
R_n(c):=\{(x,y)\in [n]^2: x+y\le n \text{ and }c(x), c(y), c(x+y) \text{ are distinct} \}.
\end{equation}
 
We count $(x,y)$ and $(y,x)$ separately if $x\ne y$. Note that the total number of ordered pairs $(x,y)\in [n]^2$ that lead to a Schur triple is $n\choose 2$. Our interest is in the parameter
\[ \Lambda_{n,k}:=\frac{1}{{n\choose 2}}\max_{c:[n]\to [k]} |R_n(c)|.\]
In words, $\Lambda_{n,k}$ is the maximum fraction of Schur triples that can be rainbow in $k$-colorings of $[n]$. 

Parczyk and Spiegel \cite{Parczyk_Spiegel_2026} proved that the maximum fraction of rainbow Schur triples, over all $3$-colorings of $[n]$, is bounded asymptotically between $0.4$ and $0.66364$, i.e.,
 \[ 0.4\le \liminf_{n\to \infty} \Lambda_{n,3}\le  \limsup_{n\to \infty} \Lambda_{n,3} \le 0.66364.\]
Furthermore, they conjectured that their lower bound is optimal and that their construction is unique.

Our first main result is the following improved lower and upper bounds in the $3$-color case, disproving the conjecture of Parczyk and Spiegel.

\begin{theorem}\label{thm:lower_upper_bound_3_color} We have  
 \[ 0.40909< \frac{9}{22}\le \liminf_{n\to \infty} \Lambda_{n,3}\le  \limsup_{n\to \infty} \Lambda_{n,3} \le \frac{8}{15}< 0.53334.\]
\end{theorem}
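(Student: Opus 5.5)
The statement has two independent halves, and I would treat them separately: an explicit construction for the lower bound $9/22$, and an averaging (local-configuration) argument for the upper bound $8/15$.

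\textbf{Lower bound.} I would pass to a continuous model. Fix a partition of $[0,1]$ into finitely many intervals, each given one of three colors, and color $x\in[n]$ by the color of $x/n$. Then $|R_n(c)|/\binom n2$ converges to $2\,\mu(\{(x,y): x,y\ge 0,\ x+y\le 1,\ c(x),c(y),c(x+y)\text{ distinct}\})$. This measure is a finite sum of areas of polygons cut out of the triangle $\{x+y\le 1\}$ by the lines $x=a_i$, $y=a_i$ and $x+y=a_i$, where the $a_i$ are the breakpoints. The Parczyk--Spiegel construction gives $0.4$. The denominator $11$ in $9/22$ suggests that the improved construction has breakpoints at multiples of $1/11$ (or at rationals with denominator a divisor of $22$ after squaring). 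So I would search over interval colorings with a small number of blocks (say up to $6$--$8$), with breakpoints on a fine grid. The value is a piecewise-quadratic function of the breakpoints, so I would optimise it locally and then verify the optimum exactly by summing triangle and trapezoid areas. A key point is that the extremal coloring need not use each color on a single interval; allowing one color to appear on two separated intervals is presumably what beats $0.4$. Finally I would check that discretisation costs only $O(1/n)$, which gives the $\liminf$ statement.

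\textbf{Upper bound.} I would use a local-averaging argument. Take a small additive configuration, for example a set $P=\{p_0<p_1<\dots<p_{m-1}\}\subseteq[n]\cup\{0\}$ with $p_0=0$, and color each pair $\{p_i,p_j\}$ by $c(p_j-p_i)$. Every triangle $i<j<k$ then corresponds to the Schur triple $(p_j-p_i,\ p_k-p_j,\ p_k-p_i)$, and a rainbow Schur triple becomes a rainbow triangle in a $3$-edge-colored $K_m$. The edge coloring is not arbitrary, because equal differences must receive equal colors. For instance, in a $4$-term arithmetic progression the three edges of common difference $d$ share a color. Such constraints prevent the $K_4$ perfect-matching coloring from making every triangle rainbow. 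My plan is:
\begin{enumerate}[(i)]
\item choose one or a few families of configurations (arithmetic-progression-like families such as $\{0,x,y,x+y,2x,\ldots\}$);
\item check that, suitably weighted, every Schur triple $(x,y,x+y)$ with $x+y\le n$ is covered the same number of times up to $o(n^2)$ error, so that the density of rainbow triples is an average of local rainbow densities;
\item enumerate all $3$-colorings of the finitely many distinct differences in a configuration, and show that at most an $8/15$ proportion of its triangles can be rainbow.
\end{enumerate}
The shape $8/15$ suggests either a configuration with $15$ (or $30$) triangle incidences, or a weighted combination of two configuration types solved as a small linear program.

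\textbf{Main obstacle.} I expect the hardest step to be (ii) together with the choice of configuration. Sampling random points directly produces Schur triples with density proportional to $n-x-y$ rather than uniform density. So the configuration must be parametrised so that each pair $(x,y)$ is hit with uniform weight, for instance by fixing the point $0$ and using only triangles that contain it, or by reweighting with an explicit density. My first attempt would be configurations generated by two or three free parameters $x,y,z$ with all sums bounded by $n$, rescaling the count region appropriately. If one configuration type does not give exactly $8/15$, I would set up a flag-algebra-style linear program over several types and extract a rational certificate.
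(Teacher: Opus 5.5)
Both halves of your proposal have a genuine gap. What you describe is a search strategy, and in each half the idea that actually produces the constant is missing.

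\textbf{Lower bound.} Interval colorings $c(x)=f(x/n)$ are the wrong family to search. The $0.4$ coloring of Parczyk and Spiegel, which the paper modifies, is not an interval coloring, and neither is the $9/22$ coloring.

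\begin{itemize}
\item Both color \emph{every even number} with color $3$ and use intervals only on the odd numbers.
\item For $9/22$: odd $i\le 4n/11$ get color $1$, odd $i\in(4n/11,10n/11]$ get color $2$, and odd $i>10n/11$ get color $1$.
\item Since odd$+$odd is even and odd$+$even is odd, every pair $(x,y)$ other than two evens has exactly one even entry among $x,y,x+y$. Such a pair is rainbow iff its two odd entries get different colors.
\item Each of the three admissible parity patterns carries a quarter of all pairs. So the rainbow fraction is $\tfrac14\cdot 2\,(1-m(g))=\tfrac12(1-m(g))$, where $g$ is the induced $2$-coloring of the odd numbers and $m(g)$ is its monochromatic Schur density.
\item The optimal $2$-coloring (Robertson--Zeilberger, Schoen) has $m=2/11$, with breakpoints $4/11$ and $10/11$, which gives $9/22$. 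The two-block $g$ with breakpoint $2/5$ has $m=1/5$ and gives $0.4$.
\end{itemize}

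A few-block interval coloring is locally constant. So pairs with $\min\{x,y\}$ or $|x-y|$ small are almost never rainbow, whereas under the parity coloring they are rainbow with positive density. Refining the blocks does not reproduce the exact rule odd$+$odd$=$even, since a locally fine pattern on the reals does not realize it. So there is no reason to expect your search to reach $9/22$. Your instinct that one color should occupy two separated intervals is right, but it applies to $g$, not to $[n]$.

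\textbf{Upper bound.} As you describe it, local averaging over small configurations does not yield $8/15$.

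For a fixed small configuration, step (iii) can fail outright. In $\{0,x,y,x+y\}$ with $y\neq 2x$, the coloring $c(x)=1$, $c(y)=2$, $c(y-x)=c(x+y)=3$ makes all four triangles rainbow.

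The constant $8/15$ is not the value of a small enumeration or LP; it equals $\tfrac43\cdot\tfrac25$.
\begin{itemize}
\item The factor $\tfrac25$ is the asymptotic rainbow-triangle density of Balogh et al., a flag-algebra theorem about large complete graphs.
\item The factor $\tfrac43$ is the ratio of the number of triangles of $K_{2n+1}$ up to translation ($\approx\tfrac23 n^2$) to the number of Schur pairs ($\approx\tfrac12 n^2$).
\end{itemize}

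The missing step is exactly the fix for the non-uniform weighting you flagged.
\begin{itemize}
\item Take the vertex set $\mathbb{Z}/N\mathbb{Z}$ with $N=2n+1$, and color $\{u,v\}$ by $c(\|v-u\|_N)$, where $\|a\|_N=\min\{r,N-r\}$.
\item By translation invariance, each $(x,y)\in R_n(c)$ yields $N$ rainbow triangles $\{t,t+x,t+x+y\}$.
\item These triangles are pairwise distinct: $N-x-y\ge n+1$ is the unique cyclic gap exceeding $n$, which recovers $t$ and then $(x,y)$.
\item Thus $N|R_n(c)|\le F(N)=(\tfrac1{15}+o(1))N^3$, that is, $|R_n(c)|\le(\tfrac4{15}+o(1))n^2=(\tfrac8{15}+o(1))\binom n2$.
\end{itemize}
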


Our proof of the upper bound $8/15$ (Theorem \ref{thm:upper_bound_3_color}) is very short. In \cite{Parczyk_Spiegel_2026}, the well-known connection between Schur triples and triangles in complete graphs is used, together with a rainbow triangle counting result of Balogh et al.~\cite{Balogh_Hu_Lidicky_Pfender_Volec_Young_2017}. The main technicality in their argument involves dealing with a weighted count of rainbow Schur triples, since different Schur triples correspond to different numbers of triangles in their setup. In our proof, we overcome this issue and simplify the argument by working in an edge-coloring of a bigger complete graph defined on an appropriate cyclic group, where every rainbow Schur triple corresponds to the same number of rainbow triangles. We prove the lower bound $9/22$ (Theorem \ref{thm:lower_bound_3_color}) by a careful modification of the construction of Parczyk and Spiegel \cite{Parczyk_Spiegel_2026}.

Our investigations suggest that the optimal asymptotic fraction may be $9/22$. We pose it as a problem.

\begin{problem} Is it true that 
\[ \lim_{n\to \infty} \Lambda_{n,3} = \frac{9}{22}?\]
\end{problem}

A more detailed computational work improves the upper bound to $0.48$, but we choose not to discuss it, as this is still far from the conjectured optimal $9/22$, and to keep our presentation short and clean.

If one considers $k$-colorings of $[n]$ where $k$ is large, then one naturally expects there to be many rainbow Schur triples. We formally prove upper and lower bounds for general $k$. To the best of our knowledge, these are the first non-trivial bounds for $k\geq 4$. 

\begin{theorem}\label{thm:general_lower_upper_bound} For a fixed $k\ge 3$, we have 
\[ 1-\frac{6}{2k+3}\le \frac{(k-2)(k+1)}{(k-1)(k+3)}\le \liminf_{n\to \infty} \Lambda_{n,k}\le \limsup_{n\to \infty} \Lambda_{n,k}\le 1-\frac{1}{k}.\]
\end{theorem}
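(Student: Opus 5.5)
For the \textbf{upper bound} $\limsup \Lambda_{n,k}\le 1-\frac1k$ I would argue directly in $[n]$, counting non-rainbow Schur triples via same-colored pairs; no graph machinery seems necessary. Fix a $k$-coloring $c$ with color classes $C_1,\dots,C_k$. By Cauchy--Schwarz the number of unordered pairs $\{u,w\}$ with $u<w$ and $c(u)=c(w)$ is
\[
\sum_i\binom{|C_i|}{2}\ \ge\ k\binom{n/k}{2}\ =\ \frac{n^2}{2k}-O(n).
\]
Each such pair determines the Schur triple $(u,w-u,w)$, and its two ordered versions $(u,w-u)$ and $(w-u,u)$ both lie outside $R_n(c)$, because $u$ and $w$ share a color. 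Conversely, an ordered Schur pair $(a,b)$ arises in this way only from the unordered pairs $\{a,a+b\}$ and $\{b,a+b\}$, so it is produced at most twice. Each same-colored pair yields $2$ ordered Schur pairs and each ordered pair is counted at most $2$ times, so the number of non-rainbow ordered Schur pairs is at least $n^2/(2k)-O(n)$. Dividing by $\binom n2$ gives that the non-rainbow fraction is at least $\frac1k-o(1)$, and hence $\Lambda_{n,k}\le 1-\frac1k+o(1)$.

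For the \textbf{lower bound} I would use a ``small block plus random'' construction generalizing the $k=3$ idea. Set $a=\frac{2}{k+3}$. Color $[1,an]$ with color $1$, and color each element of $(an,n]$ independently and uniformly with one of the remaining $k-1$ colors. I would then compute the expected fraction of rainbow ordered pairs $(x,y)$, normalizing to the triangle $\{x,y\ge 0,\ x+y\le 1\}$ of area $\frac12$, by splitting into three types.
\begin{itemize}
\item If both $x,y<a$, the triple is never rainbow.
\item If exactly one of $x,y$ is below $a$, then $x+y>a$ automatically. The region has area $2(a-\frac32a^2)$ counting both orders, and the triple is rainbow with probability $p=\frac{k-2}{k-1}$.
\item If both $x,y>a$, the region has area $\frac{(1-2a)^2}{2}$, and we need three distinct colors among $k-1$, which happens with probability $p\cdot\frac{k-3}{k-1}$.
\end{itemize}
Since every element in the random part has a genuinely random color and $x\ne y$, $x\neq x+y$, the probabilities are exact up to $O(1/n)$ diagonal effects. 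The resulting expected fraction is
\[
f(a)=p\Bigl[4a-6a^2+\tfrac{k-3}{k-1}(1-2a)^2\Bigr]+o(1).
\]
Setting $f'(a)=0$ gives $a=\frac{2}{k+3}$. Substituting this back should yield $\frac{(k-2)(k+1)}{(k-1)(k+3)}$; as a check, for $k=3$ it gives $\frac13$. Since some coloring attains at least the expectation, this proves the $\liminf$ bound.

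Finally, the inequality $1-\frac{6}{2k+3}\le\frac{(k-2)(k+1)}{(k-1)(k+3)}$ is elementary algebra: clear denominators and compare the two polynomials in $k$ for $k\ge3$. The step I expect to need the most care is the exact evaluation of $f$ at the optimum and confirming that it simplifies to the stated closed form. In particular I must make sure the region areas are right, namely that $x+y\le1$ does not further cut the mixed region beyond the $a^2/2$ correction. If the simplification fails, I would re-examine whether the small block should instead be placed so that sums from it also avoid color $1$; the random-coloring framework would stay the same.
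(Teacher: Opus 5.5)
Your proposal is correct and matches the paper's proof. The lower bound uses exactly the paper's construction (color $[m]$ with $m/n\to\frac{2}{k+3}$ by color $1$, the rest uniformly from the other $k-1$ colors) and the same region-by-region expectation. Your upper bound is the complement of the paper's injection $(x,y)\mapsto\{x,x+y\}$ into bichromatic pairs followed by Cauchy--Schwarz; your two-orderings double count is unnecessary, since $\{u,w\}\mapsto(u,w-u)$ is already injective. The substitution you left pending does work out: at $a=\frac{2}{k+3}$ the bracket equals $\frac{8k+(k-3)(k-1)}{(k+3)^2}=\frac{k+1}{k+3}$, and the first inequality reduces, after clearing denominators, to $5(k-3)\ge 0$.
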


Note that independently coloring integers in $[n]$ using colors chosen uniformly at random from $[k]$ gives probability $\frac{(k-1)(k-2)}{k^2}$ for a Schur triple to be rainbow, and so immediately one gets 
\[\liminf_{n\to \infty} \Lambda_{n,k} \ge \frac{(k-1)(k-2)}{k^2}.\]
We prove the better lower bound given in Theorem \ref{thm:general_lower_upper_bound} using a slightly non-trivial random coloring of $[n]$ (Theorem \ref{thm:general_lower_bound}). The upper bound is essentially an application of the Cauchy-Schwarz inequality (Theorem \ref{thm:general_upper_bound}). It is clear from Theorem \ref{thm:general_lower_upper_bound} that as $k\to \infty$, the asymptotic maximum fraction of rainbow Schur triples in $k$-colorings of $[n]$ approaches $1$. It is fair to say that for this problem, the case $k=3$ is the most interesting one, and the bounds in Theorem \ref{thm:lower_upper_bound_3_color} are better than those coming from Theorem \ref{thm:general_lower_upper_bound}. We think that the bounds in Theorem \ref{thm:general_lower_upper_bound} are likely not optimal. It would be interesting to close the gap. The $k=4$ case is worth further investigation. 

\section{The $3$-color case}

\subsection{Upper bound when $k=3$}

We now present an argument which improves the upper bound for $\Lambda_{n,3}$ over those in \cite{Parczyk_Spiegel_2026} and Theorem \ref{thm:general_upper_bound}. In \cite{Parczyk_Spiegel_2026} they use the classical trick of converting a coloring of \([n]\) to an edge-coloring of \(K_{n+1}\) and then use the following result of Balogh, Hu, Lidick\'y, Pfender, Volec and
Young \cite{Balogh_Hu_Lidicky_Pfender_Volec_Young_2017} concerning rainbow triangles in edge-colored complete graphs. 

\begin{theorem}[Balogh--Hu--Lidick\'y--Pfender--Volec--Young \cite{Balogh_Hu_Lidicky_Pfender_Volec_Young_2017}]\label{thm:raibow_triangles}
The maximum number $F(m)$ of rainbow triangles in a
three-edge-coloring of $K_m$ satisfies
\[
 F(m)=\left(\frac1{15}+o(1)\right)m^3.
\]
Equivalently, the maximum rainbow-triangle density tends to $2/5$.
\end{theorem}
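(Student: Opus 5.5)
The identity $F(m)=\left(\frac1{15}+o(1)\right)m^3$ is the same as saying that the rainbow triangle density tends to $2/5$, since $K_m$ has $\binom m3=(\frac16+o(1))m^3$ triangles and $\frac25\cdot\frac16=\frac1{15}$. So I would prove a matching lower bound (a construction) and upper bound for the density $2/5$.

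\emph{Lower bound.} I would use an iterated blow-up of a proper $3$-edge-coloring of $K_4$.
\begin{itemize}
\item The edges of $K_4$ split into three perfect matchings, and giving each matching its own color makes all four triangles of $K_4$ rainbow.
\item Partition $[m]$ into four nearly equal parts $V_1,\dots,V_4$. Color every edge between $V_i$ and $V_j$ with the color of the edge $ij$ of $K_4$.
\item Recurse inside each $V_i$.
\end{itemize}
A uniformly random triple meets three distinct parts with probability $\frac34\cdot\frac24=\frac38$, and every such triple is rainbow. It lies inside a single part with probability $4\cdot\frac1{64}=\frac1{16}$. In every remaining case two vertices share a part and the third lies in a different part $V_j$; the two edges to the third vertex then have the same color, so the triple is not rainbow. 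The limiting density $d$ therefore satisfies
\[ d=\frac38+\frac{d}{16}, \]
so $d=\frac25$. Rounding errors from unequal part sizes contribute only $o(m^3)$.

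\emph{Upper bound.} I would use Razborov's flag algebra method on the theory of $3$-edge-colored complete graphs.
\begin{enumerate}
\item Fix a small order $\ell$, say $\ell=5$ or $6$. Express the rainbow triangle density as a linear combination, with coefficients $p_H$, of the densities of all $3$-edge-colored $K_\ell$'s $H$, up to isomorphism.
\item Choose types $\sigma$ on $\ell-2$ or so vertices, and the flags of the appropriate size rooted at each $\sigma$.
\item Find positive semidefinite matrices $Q_\sigma$ such that
\[ \frac25-p_H \;\ge\; \sum_\sigma \bigl[\![\, f_\sigma^{T}Q_\sigma f_\sigma \,]\!\bigr]_H \quad\text{for every } H. \]
The averaged quadratic forms on the right are nonnegative in the limit, so this yields density $\le\frac25+o(1)$.
\end{enumerate}
The matrices $Q_\sigma$ come from solving an SDP numerically. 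I would then round them to exact rationals while preserving positive semidefiniteness, projecting onto the orthogonal complement of the vectors forced to be in the kernel by the extremal construction.

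The main obstacle is this upper bound. Tightness of the iterated construction forces the certificate to be exactly $2/5$ with no slack. The iterated structure also means that small flag sizes may not close the gap, so the rounding step needs the exact kernel predicted by the construction.

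If a plain certificate at feasible size only gives $\frac25+\varepsilon$, I would instead prove stability: near-extremal colorings are close to the four-part blow-up at top level. I would then finish by induction on $m$ using the recursion $d=\frac38+\frac d{16}$, which avoids needing an exact certificate. That is the route I would expect Balogh et al.\ to have taken for the exact result.
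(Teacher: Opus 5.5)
The paper gives no proof of this statement. It quotes the result from Balogh--Hu--Lidick\'y--Pfender--Volec--Young and uses it as a black box, and it needs only the upper bound $F(N)\le(\frac1{15}+o(1))N^3$, in the proof of Theorem~\ref{thm:upper_bound_3_color}.

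Your lower-bound half is correct and complete. In the properly $3$-edge-colored $K_4$ every triangle is rainbow. A random triple meets three parts with probability $\frac38$ and lies inside one part with probability $\frac1{16}$, and all other triples are non-rainbow. So $d=\frac38+\frac d{16}$, which gives $d=\frac25$.

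The upper-bound half is a description of a method, not an argument. Step~3 postulates positive semidefinite matrices $Q_\sigma$ such that $\frac25-p_H$ dominates the averaged quadratic forms for every $H$. You neither exhibit such a certificate nor give a reason one exists at any fixed flag size, and you note yourself that the iterated extremal structure may prevent this. So the direction the paper actually relies on is not established by your proposal.

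Your fallback is closer to what Balogh et al.\ do (their proof combines flag algebras with stability arguments), but as written it skips the decisive step.
\begin{itemize}
\item A certificate with slack, of value $\frac25+\eta$, gives at best the following: colorings with rainbow density at least $\frac25$ are $\eta'$-close to a top-level four-part blow-up, for some \emph{fixed} $\eta'>0$ determined by the slack.
\item Plugging this into the recursion gives $F(m)\le F(a)+F(b)+F(c)+F(d)+abc+abd+acd+bcd+\eta' m^3$.
\item Summing these errors over the levels of the recursion gives only $F(m)\le\bigl(\frac1{15}+\frac{16}{15}\eta'+o(1)\bigr)m^3$, not the exact constant.
\end{itemize}
So the fallback does not ``avoid needing an exact certificate'' on its own.

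The missing idea is an exactness (cleaning) argument that uses the maximality of an extremal coloring, for example by moving misplaced vertices between parts and recoloring deviant cross edges. Its goal is to show that for all $m\ge m_0$, every extremal coloring is \emph{exactly} a blow-up of the properly colored $K_4$ at the top level. Then the Erd\H{o}s--S\'os recurrence $F(m)=F(a)+F(b)+F(c)+F(d)+abc+abd+acd+bcd$ holds with equality. Only after that does unrolling the recurrence give $F(m)=(\frac1{15}+o(1))m^3$, since blocks below size $m_0$ contribute $O(m\,m_0^2)=o(m^3)$. This step is the bulk of the original paper, and your proposal does not address it.
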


The main advantage of our proof is that we transfer a coloring of $[n]$ to an edge-coloring complete graph $K_{2n+1}$ in such a way that all rainbow Schur triples correspond to the same number of rainbow triangles, unlike in \cite{Parczyk_Spiegel_2026}. So a direct application of the rainbow triangle bound gives the desired result instead of a weighted count, which lets us bypass the reweighing and optimization steps from \cite{Parczyk_Spiegel_2026}.

\begin{theorem}\label{thm:upper_bound_3_color} We have 
\[\limsup_{n\to \infty} \Lambda_{n,3} \le \frac{8}{15}.\]
\end{theorem}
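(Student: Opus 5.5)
The plan is to embed a $3$-coloring of $[n]$ into a $3$-edge-coloring of $K_{2n+1}$ whose vertex set is the cyclic group $\Z_{2n+1}$. The embedding should make every rainbow Schur pair produce exactly $2n+1$ distinct rainbow triangles, with no triangle produced twice. Theorem~\ref{thm:raibow_triangles} then bounds the count.

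\emph{Step 1 (the coloring).} Fix a $3$-coloring $c:[n]\to[3]$. Every nonzero residue $d\in\Z_{2n+1}$ has a unique representative $|d|\in[n]$ with $d\equiv \pm|d|$. Take the complete graph on vertex set $\Z_{2n+1}$ and color the edge $uv$ by $c(|u-v|)$. This is well defined because $|u-v|=|v-u|$, and it uses only the three colors of $c$.

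\emph{Step 2 (from pairs to triangles).} Take any $(x,y)\in R_n(c)$ and any $u\in\Z_{2n+1}$. The vertices $u,\ u+x,\ u+x+y$ are distinct, since $1\le x,\ y,\ x+y\le n<2n+1$. The three edges between them have differences $x$, $y$ and $x+y$, all of which already lie in $[n]$. So these edges receive the colors $c(x),c(y),c(x+y)$, and the triangle is rainbow.

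\emph{Step 3 (injectivity).} I need the map $(u,x,y)\mapsto\{u,u+x,u+x+y\}$ to be injective; this is the only point needing care. Place the three vertices of a triangle on the cycle $\Z_{2n+1}$. They cut the cycle into three gaps summing to $2n+1$, so at most one gap has length $\ge n+1$. A triple $(u,x,y)$ as in Step 2 corresponds to a gap of length $2n+1-(x+y)\ge n+1$. Hence that gap is exactly the unique long gap, and it determines the triple:
\begin{itemize}
\item $u$ is the vertex at the clockwise end of the long gap;
\item $x$ and $y$ are the two remaining consecutive gaps, read in order.
\end{itemize}
In particular the pairs $(x,y)$ and $(y,x)$ give different triangles, as the ordered counting requires.

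\emph{Step 4 (conclusion).} Steps 2 and 3 show that the number of rainbow triangles is at least $(2n+1)|R_n(c)|$. Theorem~\ref{thm:raibow_triangles} bounds this number by $F(2n+1)=(\tfrac1{15}+o(1))(2n+1)^3$. Therefore
\[
|R_n(c)|\le\Big(\tfrac1{15}+o(1)\Big)(2n+1)^2=\Big(\tfrac4{15}+o(1)\Big)n^2 .
\]
Dividing by $\binom n2=(\tfrac12+o(1))n^2$ and maximizing over $c$ gives $\limsup_{n\to\infty}\Lambda_{n,3}\le 8/15$.
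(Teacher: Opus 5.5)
Your proposal is correct and follows the paper's proof essentially step for step. Both color the edges of $K_{2n+1}$ on $\Z/(2n+1)\Z$ by the cyclic distance, send each $(u,x,y)$ to the rainbow triangle $\{u,u+x,u+x+y\}$, get injectivity from the unique gap of length $\ge n+1$, and then apply the Balogh et al.\ bound to obtain $(2n+1)|R_n(c)|\le F(2n+1)$, hence $8/15$.
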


\begin{proof}
Consider a $k$-coloring $c:[n]\to[k]$. Put $N=2n+1$ and identify
the vertices of $K_N$ with the cyclic group $\Z/N\Z$.  For
$a\ne0$, let
\[
 \|a\|_N:=\min\{r,N-r\}\in[n],
\]
where $r \equiv a \pmod N$ is the least positive representative of $a$. Color the edge
$\{u,v\}$ of $K_N$ by
\(c(\|v-u\|_N).\) This is well defined because $\|v-u\|_N=\|u-v\|_N$. 

For every $t\in\Z/N\Z$ and every ordered pair $(x,y)$ in $R_{n}(c)$, consider the triangle in $K_{N}$ given by the map
\[
 \Phi(t,x,y):=\{t,t+x,t+x+y\},
\]

where the addition is in $\Z/N\Z$. The three pair-wise differences are $x,y,x+y\le n$, so its edge colors are
$c(x),c(y),c(x+y)$ and hence the triangle is rainbow. 

Next, we observe that $\Phi$ is injective. To see this, note that in positive cyclic order the consecutive gaps in $\{t,t+x,t+x+y\}$
are \( x, y\), and \(N-x-y.\) The first two are at most $n$, while
$N-x-y\ge N-n=n+1$. Thus, the third is the unique gap longer than
$n$. The triangle, therefore, recovers $t$ as the vertex
immediately after this long gap, and the next two gaps recover the ordered pair $(x,y)$. So each ordered rainbow Schur triple uniquely corresponds to $N$ distinct rainbow triangles. Therefore,
\[
 N |R_n(c)| \le F(N).
\]
Applying the rainbow triangle bound and then plugging in $N=2n+1$ gives

\[
 |R_n(c)|\le\frac{F(N)}{N}
   =\left(\frac{4}{15}+o(1)\right)n^2 = \left(\frac{8}{15}+o(1)\right)\binom{n}{2},
\]
and the assertion holds. 
\end{proof}

\subsection{Lower bound when $k=3$}

Here, we prove a lower bound for $\Lambda_{n,3}$ using a modification of the coloring in \cite{Parczyk_Spiegel_2026}, which depends both on intervals and the parities of integers in $[n]$. The intervals in the coloring presented here are reminiscent of the 2-coloring of $[n]$ which minimizes the number of monochromatic Schur triples \cite{ Datskovsky_2003, Robertson_Zeilberger_1998, Schoen_1999}.

\begin{theorem}\label{thm:lower_bound_3_color}
There exists a sequence of colorings $c_n:[n]\to [3]$ for which
\[
|R_n(c_n)|
=
\left(\frac{9}{44}+o(1)\right)n^2.
\]
Consequently,
\[
\liminf_{n\to \infty} \Lambda_{n,3}\ge \frac{9}{22}.
\]
\end{theorem}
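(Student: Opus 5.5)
The plan is to give an explicit coloring $c_n$ that depends only on two pieces of data about each integer $m\in[n]$: which of a bounded number of intervals $m/n$ falls into, and the parity of $m$. Then $|R_n(c_n)|$ is counted as an area, up to $o(n^2)$ error. I would start from the Parczyk--Spiegel coloring, which already has this interval-and-parity form and gives density $2/5$. The breakpoints would be moved to positions modeled on the Robertson--Zeilberger/Schoen extremal $2$-coloring for monochromatic Schur triples, whose thresholds are $4n/11$ and $10n/11$. The denominator $11$ in $9/22$ suggests that breakpoints at multiples of $n/11$ are the right scale.

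Concretely, I would fix thresholds $0=\alpha_0<\alpha_1<\dots<\alpha_r=1$ and an assignment
\[
\chi:\{1,\dots,r\}\times\{\text{even},\text{odd}\}\to[3],
\]
and set $c_n(m)=\chi(i,m \bmod 2)$ for $m\in(\alpha_{i-1}n,\alpha_i n]$. One parity constraint is built into this setup: since $x+y\equiv z \pmod 2$, a Schur triple contains either zero or two odd numbers. So each triple falls into one of four parity patterns: (even, even, even), (odd, odd, even), (odd, even, odd) and (even, odd, odd). Each pattern has asymptotic weight $1/4$ among the pairs $(x,y)$. The colors must therefore be distributed so that both parity classes carry at least two colors on suitable intervals; otherwise, as in the naive attempts, every triple repeats a color. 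For a fixed pattern and a fixed triple of intervals $(I_a,I_b,I_c)$ containing $(x,y,x+y)$, the pairs involved form, after scaling by $n$, the polygon
\[
\{(s,t): s\in I_a,\ t\in I_b,\ s+t\in I_c,\ s+t\le 1\}.
\]
Summing over rainbow choices gives
\[
|R_n(c_n)| = \Bigl(\tfrac14\sum \operatorname{area}+o(1)\Bigr)n^2,
\]
where the sum runs over pairs (parity pattern, interval triple) whose three prescribed colors are distinct. The $o(1)$ term absorbs boundary effects along the lines $s=\alpha_i$, $t=\alpha_j$ and $s+t=\alpha_k$, together with parity rounding. Each of these contributes $O(n)$ pairs.

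Next I would treat the areas as piecewise quadratic functions of the $\alpha_i$ and optimize over the finitely many combinatorial types of $\chi$ with small $r$ (say $r\le 3$). I expect that a modification of Parczyk--Spiegel is optimized at rational thresholds with denominator $11$. At those thresholds the total area weighted by $1/4$ should equal $9/44$. Checking this requires listing the polygons and verifying the sum, which is a routine but careful computation. Once $|R_n(c_n)|=(9/44+o(1))n^2$ is established, dividing by $\binom n2=(1/2+o(1))n^2$ gives the bound $\liminf\Lambda_{n,3}\ge 9/22$.

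The main obstacle is the first step: finding the right choice of $\chi$ and thresholds. The paper gives no hint beyond the phrase ``careful modification''. I would search for it by brute force over assignments $\chi$ with $r\le 3$ or $4$, optimizing the resulting quadratic in the $\alpha_i$ in each case. Given the analogy with the monochromatic $2/11$ problem, I would try $(\alpha_1,\alpha_2)=(4/11,10/11)$ first. The area bookkeeping itself is mechanical by comparison.
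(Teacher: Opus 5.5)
There is a genuine gap. What you have written is a search strategy, not a proof. The statement is an existence claim, and its entire content is an explicit coloring together with a verification that it has $(\frac{9}{44}+o(1))n^2$ rainbow pairs. You provide neither. You never specify $\chi$, i.e.\ which color goes on which (interval, parity) cell. Your claim that the weighted area at the denominator-$11$ thresholds ``should equal $9/44$'' is read off from the target value, not computed. Your area bookkeeping is sound: each parity pattern contributes $\frac14$ of a polygon area, with $O(n)$ boundary error. Your guess $(\alpha_1,\alpha_2)=(4/11,10/11)$ also turns out to be correct. But without $\chi$ nothing is proved.

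Your heuristic that both parity classes must carry at least two colors is false. If you imposed it in the search, it would exclude the right coloring. In the paper's construction every even integer gets color $3$. The (even, even, even) pattern then contributes nothing, and all rainbow triples come from the three mixed patterns.

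The missing construction is as follows. Color every even integer $3$. Let $A,B,C$ be the odd integers in $[1,\frac{4n}{11}]$, $(\frac{4n}{11},\frac{10n}{11}]$ and $(\frac{10n}{11},n]$, colored $1,2,1$ respectively.

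Suppose exactly one of $x,y$ is odd. Then the odd summand $u$ and $v=x+y$ are both odd with $u<v$. The triple is rainbow iff $(u,v)\in A\times B$ or $(u,v)\in B\times C$. Each such pair gives a valid even summand $v-u$. These triples therefore contribute $2(|A||B|+|B||C|)=2\bigl(\frac{2}{11}\cdot\frac{3}{11}+\frac{3}{11}\cdot\frac{1}{22}\bigr)n^2+o(n^2)=\frac{15}{121}n^2+o(n^2)$.

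Suppose both $x,y$ are odd. Then $x+y$ has color $3$, and $\{x,y\}$ must meet both $A$ and $B$, since a pair from $B$ and $C$ has sum exceeding $n$. These triples contribute
\[
2\cdot\frac{n^2}{4}\int_{4/11}^{10/11}\min\Bigl\{\frac{4}{11},\,1-t\Bigr\}\,dt+O(n)=\frac{39}{484}n^2+O(n).
\]
The total is $\frac{99}{484}n^2=\frac{9}{44}n^2$, and dividing by $\binom{n}{2}$ gives $\frac{9}{22}$.

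For this $\chi$, the thresholds $(4/11,10/11)$ are exactly where the resulting quadratic in $(\alpha_1,\alpha_2)$ is maximized. So your optimization step would have recovered them, but only once the correct $\chi$ was fixed.
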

\begin{proof}
For $n\in \mathbb{N}$, take
\begin{align*}
A& =\left\{i\in[n]: i\text{ is odd and }i\leq\frac{4n}{11}\right\},\\
B& =\left\{i\in[n]: i\text{ is odd and }
\frac{4n}{11}<i\leq\frac{10n}{11}\right\},\\
C&=\left\{i\in[n]: i\text{ is odd and }i>\frac{10n}{11}\right\}.
\end{align*}
Clearly, $A\sqcup B\sqcup C$ is a partition of all odd integers in $[n]$. Define the coloring $c_n$ as follows:
\[
c_n(i)=
\begin{cases}
1 & i\in A;\\
2 & i\in B;\\
1 & i\in C;\\
3 &  i \text{ is even}.
\end{cases}
\]

Next, we count the rainbow Schur triples $(x,y,x+y)$ w.r.t. the coloring $c_n$ according to the parity of the summands $x,y$. By the definition of $c_n$, both $x$ and $y$ cannot be even, or else they receive the same color. So we can partition
\[ R_n(c_n) = R_{\mathrm{oo}} \sqcup R_{\mathrm{oe}},\]
where $R_{oo}$ contains the pairs $(x,y)$ where both $x,y$ are odd, and $R_{\mathrm{oe}}$ contains the pairs where exactly one of $x,y$ is odd. 

\begin{claim}\label{claim:R_oe} We have 
\[|R_{\mathrm{oe}}| = \left(\frac{15}{121}+o(1)\right)n^2.\]
\end{claim}

\begin{proof}
Let $(x,y)\in R_{\mathrm{oe}}$. Then $x+y$ is odd. Assume first that $x$ is odd. Then $y$ is even and $c_n(y)=3$. Also, $c_n(x), c_n(x+y)\in \{1,2\}$ and $c_n(x)\neq c_n(x+y)$. It is then clear that 
\[ (x,x+y)\in A\times B \quad \text{or} \quad (x,x+y)\in B\times C.\]
Thus, the number of such triples is $|A||B| + |B||C|$. By symmetry between $x$ and $y$, it follows that 
\begin{align*}
    |R_{\mathrm{oe}}| & = 2\left(|A||B| + |B||C|\right)\\
    & = 2\left(\frac{2}{11}\frac{3}{11}+\frac{3}{11}\frac{1}{22}\right)n^2+o(n^2)\\
    &=\left(\frac{15}{121}+o(1)\right)n^2.\qedhere
\end{align*}
\end{proof}

\begin{claim}\label{claim:R_oo} We have 
\[|R_{\mathrm{oo}}| = \left(\frac{39}{484}+o(1)\right)n^2.\]
\end{claim}

\begin{proof} Let $(x,y)\in R_{\mathrm{oo}}$. Then $x+y$ is even, and hence $c_n(x+y) = 3$. Also, $c_n(x), c_n(y)\in \{1,2\}$ and $c_n(x)\neq c_n(y)$. Clearly, $x,y$ both cannot belong to $A\cup C$. Moreover, if $x\in B$ and $y\in C$, then
\[
x+y>
\frac{4n}{11}+\frac{10n}{11}>n.
\]
We conclude that $(x,y)$ cannot have one element in $B$ and the other in $C$. It remains to consider pairs $(x,y)$ having one element in $A$ and one in $B$. Define
\[
N_{AB}
=
\left\{(x,y)\in A\times B:x+y\leq n\right\} \quad \text{and}\quad N_{BA}
=
\left\{(x,y)\in B\times A:x+y\leq n\right\}.
\]
It is clear by symmetry that $|N_{AB}| = |N_{BA}|$, and
\[R_{\mathrm{oo}} = N_{AB}\sqcup N_{BA}.\]
So to estimate $|R_{oo}|$, it suffices to estimate $|N_{AB}|$. Fix an odd $y\in B$. The number of odd $x\in A$ satisfying
$x+y\leq n$ is
\[
\frac12
\min\left\{\frac{4n}{11},\,n-y\right\}+O(1).
\]
Using a Riemann-sum approximation, we obtain
\[
|N_{AB}|
=
\frac{n^2}{4}
\int_{4/11}^{10/11}
\min\left\{\frac{4}{11},1-t\right\}\,dt
+O(n).
\]
The point at which the two terms inside the minimum coincide is
$t=7/11$. Hence,
\[
\begin{aligned}
\int_{4/11}^{10/11}
\min\left\{\frac{4}{11},1-t\right\}\,dt
&=
\int_{4/11}^{7/11}\frac{4}{11}\,dt
+
\int_{7/11}^{10/11}(1-t)\,dt\\
&=
\frac{12}{121}+\frac{15}{242}\\
&=
\frac{39}{242}.
\end{aligned}
\]
It follows that
\[
|N_{AB}|
=
\left(\frac{39}{968}+o(1)\right)n^2,
\]
and therefore
\[
|R_{\mathrm{oo}}|
=
2|N_{AB}|
=
\left(\frac{39}{484}+o(1)\right)n^2.\qedhere
\]
\end{proof}
Using Claims \ref{claim:R_oe} and \ref{claim:R_oo}, we conclude that
\[    |R_n(c_n)| = |R_{\mathrm{oe}}|+|R_{\mathrm{oo}}| = \left(\frac{9}{44}+o(1)\right)n^2.\qedhere\]
\end{proof}

\section{The $k$-color case}

\subsection{General upper bound}

Here, we establish the following upper bound for $\Lambda_{n,k}$ as an application of the Cauchy-Schwarz inequality. As remarked earlier, for $k=3$ this upper bound is weaker than the one in Theorem \ref{thm:upper_bound_3_color} but it comes very close to the upper bound of Parczyk and Spiegel \cite{Parczyk_Spiegel_2026}, and the proof does not depend on rainbow triangle counting.

\begin{theorem} \label{thm:general_upper_bound}
For every $k\ge 3$, we have 
\[ \limsup_{n\to \infty} \Lambda_{n,k}\le 1-\frac{1}{k}.\]
\end{theorem}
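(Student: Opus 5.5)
The plan is to bound the number of non-rainbow Schur pairs from below, and to do it with a single family of non-rainbow pairs coming from equal colors at $x$ and $x+y$. Cauchy--Schwarz on the color class sizes will show this family has size about $n^2/(2k)$. The denominator $\binom n2$ is about $n^2/2$, so this loses exactly a $1/k$ fraction and gives the bound $1-1/k$.

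Fix a $k$-coloring $c:[n]\to[k]$ and let $d_i=|c^{-1}(i)|$, so that $\sum_i d_i=n$. Consider the set
\[
S:=\{(a,b)\in[n]^2: a<b,\ c(a)=c(b)\}.
\]
To each $(a,b)\in S$ assign the ordered pair $(x,y):=(a,b-a)$. We have $x,y\ge1$ and $x+y=b\le n$, so $(x,y)$ is one of the $\binom n2$ ordered Schur pairs. Moreover $c(x)=c(x+y)$, so $(x,y)\notin R_n(c)$. The assignment $(a,b)\mapsto(a,b-a)$ is injective, since $(a,b)$ is recovered as $(x,x+y)$. Hence
\[
|R_n(c)|\le \binom n2-|S|.
\]

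For the count of $S$, note that the number of ordered pairs $(a,b)$ with $a\ne b$ and $c(a)=c(b)$ is $\sum_i d_i(d_i-1)$. Exactly half of these have $a<b$, so
\[
|S|=\frac12\Bigl(\sum_i d_i^2-n\Bigr)\ge \frac{n^2}{2k}-\frac n2,
\]
where the inequality uses Cauchy--Schwarz in the form $\sum_i d_i^2\ge(\sum_i d_i)^2/k$. Combining the two displays gives
\[
|R_n(c)|\le \binom n2-\frac{n^2}{2k}+\frac n2=\Bigl(1-\frac1k+o(1)\Bigr)\binom n2,
\]
uniformly over all colorings $c$. Taking the maximum over $c$ and then $\limsup_{n\to\infty}$ yields the statement of Theorem \ref{thm:general_upper_bound}.

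I do not expect a real obstacle. The only point that needs care is choosing the right family of non-rainbow pairs. Using the pair $(x,x+y)$ rather than $(x,y)$ is essential, because it turns the constraint $x+y\le n$ into the free condition $a<b$ on an arbitrary monochromatic pair. That is why no weighting or extra error analysis is needed beyond the $O(n)$ diagonal term.
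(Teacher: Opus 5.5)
Your proof is correct and is essentially the paper's argument: the paper maps $R_n(c)$ injectively into the bichromatic pairs $\{x,x+y\}$ and bounds their number by $\sum_{i<j}n_in_j$ via Cauchy--Schwarz. You use the inverse of the same correspondence $(x,y)\leftrightarrow\{x,x+y\}$ on the complementary monochromatic pairs, which gives exactly the same bound $|R_n(c)|\le \frac{n^2}{2}\bigl(1-\frac1k\bigr)$.
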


\begin{proof}
Consider a $k$-coloring $c:[n]\to [k]$ and let $n_i = |c^{-1}(i)|$, i.e., $n_i$ is the size of the $i$-th color class. 

Observe that the map $\displaystyle \Phi:R_n(c)\to {[n]\choose 2}$ given by 
\[ \Phi(x,y) = \{x,x+y\}\]
is injective, since $x+y$ is larger than $x$, and knowing $x$ and $x+y$ determines $(x,y)$. Moreover, the two elements of $\Phi(x,y)$ have different colors. Therefore, 
\begin{align*}
    |R_n(c)|& \le \sum_{i<j}n_in_j\\
    & = \frac{1}{2}\left(n^2 - \sum_{i=1}^k n_i^2\right)\\
    & \le \frac{1}{2}\left(n^2-\frac{\left(\sum_{i=1}^k n_i\right)^2}{k}\right)\quad (\text{Cauchy-Schwarz})\\
    & = \frac{1}{2}\left(1-\frac{1}{k}\right)n^2. 
\end{align*}
Thus,
\[ \Lambda_{n,k} = \frac{1}{{n\choose 2}}\max_c |R_n(c)|\le 1-\frac{1}{k} + o(1),\]
and the assertion holds. 
\end{proof}

\subsection{General lower bound}

Here, we prove the following lower bound for $\Lambda_{n,k}$ using a probabilistic argument. The idea is to color the initial interval with a single color and then randomly color the remaining interval.

\begin{theorem}\label{thm:general_lower_bound} For $k\ge 3$, we have 
\[ \liminf_{n\to \infty} \Lambda_{n,k} \ge  \frac{(k-2)(k+1)}{(k-1)(k+3)}.\]
\end{theorem}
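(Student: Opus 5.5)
The plan is a probabilistic construction with one deterministic block. Fix a parameter $\alpha\in(0,1/2]$, to be optimized later.
\begin{itemize}
\item Color every integer in $I=[1,\alpha n]$ with color $1$.
\item Color every integer in $J=(\alpha n,n]$ independently and uniformly at random with one of the $k-1$ colors $\{2,\dots,k\}$.
\end{itemize}
We compute $\mathbb{E}|R_n(c)|$. Some coloring attains at least the expectation, so no concentration argument is needed. The count splits by which of $x,y$ lie in $I$; note that $x+y\le n$ throughout.

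\emph{Case 1: $x,y\in I$.} Then $c(x)=c(y)$, so no such pair is rainbow.

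\emph{Case 2: exactly one of $x,y$ lies in $I$.} Then $x+y>\alpha n$, so $x+y\in J$. The triple is rainbow exactly when the two random colors of the $J$-elements differ. These are distinct integers, so this happens with probability $\frac{k-2}{k-1}$.

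\emph{Case 3: $x,y\in J$.} Then $x+y\in J$ as well. When $x\ne y$, the three integers are distinct, and all three random colors are pairwise distinct with probability $\frac{(k-2)(k-3)}{(k-1)^2}$. The diagonal $x=y$ contributes only $O(n)$ pairs.

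Next we count lattice points by Riemann sums, exactly as in Claim~\ref{claim:R_oo}.
\begin{itemize}
\item Pairs with $x\in I$, $y\in J$, $x+y\le n$: for fixed $x$ there are $(1-\alpha)n-x$ choices of $y$, giving $\left(\alpha(1-\alpha)-\alpha^2/2+o(1)\right)n^2$ pairs. Doubling accounts for the symmetric case $x\in J$, $y\in I$.
\item Pairs with $x,y\in J$, $x+y\le n$: the region is a triangle, giving $\left((1-2\alpha)^2/2+o(1)\right)n^2$ pairs.
\end{itemize}
Dividing by $\binom n2\sim n^2/2$, the expected rainbow fraction is
\[
f(\alpha)=\frac{k-2}{k-1}\left[(4\alpha-6\alpha^2)+\frac{k-3}{k-1}(1-2\alpha)^2\right]+o(1).
\]

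Finally we optimize over $\alpha$. The equation $f'(\alpha)=0$ reads $(k-1)(1-3\alpha)=(k-3)(1-2\alpha)$, which gives $\alpha=\frac{2}{k+3}\le\frac12$. At this value,
\[
4\alpha-6\alpha^2=\frac{8k}{(k+3)^2}
\qquad\text{and}\qquad
\frac{k-3}{k-1}(1-2\alpha)^2=\frac{(k-1)(k-3)}{(k+3)^2}.
\]
These sum to $\frac{(k+1)^2}{(k+3)^2}\cdot\frac{k+3}{k+1}=\frac{k+1}{k+3}$, so the fraction is $\frac{(k-2)(k+1)}{(k-1)(k+3)}$, as required.

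The main point needing care is the case split: we must check that every pair in Case 2 has $x+y\in J$ and that all Case~1 pairs fail to be rainbow. This is immediate once the initial block uses a color absent from $J$. We must also check that the error terms (the diagonal $x=y$ and rounding at $\alpha n$) are $O(n)$. Everything else is the routine integration above.
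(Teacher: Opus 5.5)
Your proposal is correct and follows essentially the same argument as the paper. The paper also colors an initial block (of length $m$, your $\alpha n$) with color $1$, colors the rest uniformly at random from $\{2,\dots,k\}$, splits rainbow pairs by how many summands lie in the block, and evaluates at $\alpha=2/(k+3)$. The only cosmetic difference is that the paper first computes the counts exactly, as $2mn-3m^2-m$ and $\binom{n-2m}{2}-(\lfloor n/2\rfloor-m)$, and then passes to the limit $m/n\to a$, whereas you use Riemann-sum asymptotics directly.
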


\begin{proof} Let $m\in \mathbb{N}$ be a parameter, which we will choose later, such that $m\le \frac{1}{2}n$. We construct a random coloring $c:[n]\to [k]$ as follows: every integer in $[m]$ receives color $1$, while each integer in $[n]\setminus [m]$ receives a random color from $\{2, \ldots, k\}$ uniformly and independently. We will now compute $\mathbb{E}[|R_n(c)|] $, i.e., the expected number of rainbow Schur triples with respect to the random coloring $c$. To prove the assertion, it suffices to show that 
\[ \liminf_{n\to \infty} \frac{\mathbb{E}[|R_n(c)|]}{{n\choose 2}}\ge \frac{(k-2)(k+1)}{(k-1)(k+3)}. \]

To that end, we classify the ordered pairs $(x,y)$ in $R_n(c)$ according to whether $x,y$ are bigger or smaller than $m$. Note that if both $x$ and $y$ are at most $m$, then both $x$ and $y$ receive the same color, and so this case is not possible. Hence, at least one of $x$ and $y$ is larger than $m$. Thus, we can write 
\[ R_n(c) = A \sqcup B,\]
where 
\[A = \{(x,y)\in R_n(c): \text{exactly one of }x,y\text{ is more than }m\},\]
and 
\[ B = \{(x,y)\in R_n(c): \text{both }x,y\text{ are more than }m\}.\]

By linearity of expectation,
\begin{equation}\label{eq:exp_R_n_c}
   \mathbb{E}[|R_n(c)|] = \mathbb{E}[|A|] + \mathbb{E}[|B|]. 
\end{equation}

We now compute $\mathbb{E}[|A|]$ and $\mathbb{E}[|B|]$. 

\begin{claim}\label{claim:A} We have
\[ \mathbb{E}[|A|] = \frac{k-2}{k-1}\left(2mn - 3m^2 - m\right).\]
\end{claim}

\begin{proof} Consider an ordered pair $(x,y)$ and let $\alpha = \max\{x,y\}>m$. Then $x+y$ is also more than $m$, and the colors of $\alpha$ and $x+y$ are chosen independently out of the $k-1$ colors. Thus, the probability that $(x,y) \in A$ is 
\begin{equation}\label{eq:prob_A}
    \frac{(k-1)(k-2)}{(k-1)^2}=\frac{k-2}{k-1}.
\end{equation}

Now, we wish to count pairs $(x,y)$ satisfying
\[ \min\{x,y\}\le m, \quad  m+1\le \max\{x,y\}, \quad x+y\le n.\]
If we fix $x\leq m$, then this implies $y> m$. If $(x,y) \in R_{n}(c)$ then $y$ must lie between $m+1$ and $n-x$, so that $x+y\leq n$. A symmetric condition holds for the pair $(y,x)$. Thus, the number of such possible pairs is 
\begin{align}\label{eq:count_A}
2\sum_{x=1}^m |\{y: m+1\le y\le n-x\}| 
& = 2\sum_{x=1}^m (n-m-x)\nonumber\\
& = 2mn - 3m^2 - m.
\end{align}
The claim follows by \eqref{eq:prob_A} and \eqref{eq:count_A}.
\end{proof}

\begin{claim}\label{claim:B} We have 
\[ \mathbb{E}[|B|] = \frac{(k-2)(k-3)}{(k-1)^2}\ \left[{n-2m\choose 2} - \left(\left\lfloor\frac{n}{2}\right\rfloor - m\right)\right].\]
\end{claim}

\begin{proof} Consider an ordered pair $(x,y)$ such that both $x$ and $y$ are more than $m$. Then $x,y,x+y$ all receive colors independently out of the $k-1$ colors, and so the probability that $(x,y)\in B$ is 
\begin{equation}\label{eq:prob_B}
     \frac{(k-1)(k-2)(k-3)}{(k-1)^3} = \frac{(k-2)(k-3)}{(k-1)^2}.
\end{equation}

Now, we wish to count pairs $(x,y)$ with $x\ne y$ satisfying 
\[ m+1\le x, \quad m+1\le y, \quad x+y\le n.\]
Using the substitution $u=x-m$ and $v=y-m$, the above conditions become 
\[ 1\le u, \quad 1\le v, \quad u+v\le n-2m.\]
So the total number of pairs (allowing $u=v$ or equivalently $x=y$) is 
\begin{align*}
\sum_{u=1}^{n-2m-1} |\{v: 1\le v\le n-2m-u\}|
& = \sum_{u=1}^{n-2m-1} (n-2m-u)\\
& = {n-2m \choose 2}. 
\end{align*}
The number of pairs of the form $(x,x)$ such that $m+1\le x$ and $2x\le n$ is \[\left(\left\lfloor\frac{n}{2}\right\rfloor - m\right).\] 
And so the total number of such pairs is
\begin{equation}\label{eq:count_B}
    {n-2m\choose 2} - \left(\left\lfloor\frac{n}{2}\right\rfloor - m\right).
\end{equation}
The claim follows by \eqref{eq:prob_B} and \eqref{eq:count_B}.
\end{proof}

Now, using \eqref{eq:exp_R_n_c} and Claims \ref{claim:A} and \ref{claim:B}, we get 
\begin{align*}
\mathbb{E}[|R_n(c)|] 
& = \frac{k-2}{k-1}\left(2mn - 3m^2 - m\right) + \frac{(k-2)(k-3)}{(k-1)^2}\ \left[{n-2m\choose 2} - \left(\left\lfloor\frac{n}{2}\right\rfloor - m\right)\right].
\end{align*}

Assume $\frac{m}{n}\to a$ as $n\to \infty$, where $0\le a\le\frac{1}{2}$. Then 
\begin{align*}
\lim_{n\to \infty}\frac{\mathbb{E}[|R_n(c)|]}{{n\choose 2}}
& = \frac{k-2}{k-1}\left(4a - 6a^2\right) + \frac{(k-2)(k-3)}{(k-1)^2}(1-2a)^2.
\end{align*}
With $k$ fixed, the right-hand side above is maximized over $a\in [0,\frac{1}{2}]$, when $a = \frac{2}{k+3}$. Evaluating the right-hand side at $a=\frac{2}{k+3}$ gives 
\begin{align*}
\lim_{n\to \infty}\frac{\mathbb{E}[|R_n(c)|]}{{n\choose 2}}
& = \frac{(k-2)(k+1)}{(k-1)(k+3)}.
\end{align*}
This completes the proof. 
\end{proof}

\section*{Declaration of AI use}

The authors acknowledge the use of ChatGPT (GPT-5.6, OpenAI; accessed July-August 2026) solely for preliminary brainstorming and the exploration of possible proof strategies. AI tools were not used to draft the manuscript. All formal statements, arguments, and proofs in the manuscript were written and checked by the authors, who take full responsibility for the accuracy and integrity of the article.

\bibliography{citations}
\bibliographystyle{plain}

\vspace{0.4cm}

\affl{Swaroop Hegde}{swaroop.hegde@uga.edu}{Department of Mathematics, University of Georgia, Athens, United States}

\affl{Hitesh Kumar}{hitesh.kumar.math@gmail.com, hitesh\_kumar@sfu.ca}{Department of Mathematics, Simon Fraser University, Burnaby, Canada}

\affl{Pratibha}{pratibha22320@gmail.com}{Bhiwani, India}

\end{document}